\documentclass[11pt]{amsart}
\usepackage{amssymb, adjustbox, enumerate, amsbsy, stmaryrd}
\usepackage{geometry}
\geometry{a4paper,top=3cm,bottom=3cm,left=3.5cm,right=3.5cm}

\hyphenpenalty=5000
\tolerance=1000

\usepackage{todonotes}

\usepackage{amsfonts, amssymb, amscd}
\numberwithin{equation}{section}

\usepackage[symbol]{footmisc}

\usepackage{bm}
\usepackage{verbatim}
\usepackage{mathrsfs}
\usepackage{graphicx}
\usepackage{tikz-cd}
\usepackage{subcaption}
\usepackage{listings}
\usepackage{subfiles}
\usepackage[toc,page]{appendix}
\usepackage{mathtools}
\usepackage{comment}
\usepackage{enumerate}
\usepackage{enumitem}
\usepackage[all]{xy}

\usepackage{graphicx}
\graphicspath{{images/}}

\usepackage{appendix}
\usepackage{hyperref}
\hypersetup{
    colorlinks=true,
    citecolor=red,
    linkcolor=blue,
    filecolor=magenta,      
    urlcolor=red,
}
\lstset{
  basicstyle=\ttfamily,
  columns=fullflexible,
  frame=single,
  breaklines=true,
  postbreak=\mbox{\textcolor{red}{$\hookrightarrow$}\space},
}

\def\ideal#1.{I_{#1}}
\def\ring#1.{\mathcal {O}_{#1}}
\def\fring#1.{\hat{\mathcal {O}}_{#1}}
\def\proj#1.{\mathbb P(#1)}
\def\pr #1.{\mathbb P^{#1}}
\def\af #1.{\mathbb A^{#1}}
\def\Hz #1.{\mathbb F_{#1}}
\def\Hbz #1.{\overline{\mathbb F}_{#1}}
\def\pic#1.{\operatorname {Pic}\,(#1)}
\def\pico#1.{\operatorname{Pic}^0(#1)}
\def\picg#1.{\operatorname {Pic}^G(#1)}
\def\ner#1.{NS (#1)}
\def\rdown#1.{\llcorner#1\lrcorner}
\def\rup#1.{\ulcorner#1\urcorner}
\def\cone#1.{\operatorname {NE}(#1)}

\def\ccone#1.{\overline{\operatorname {NE}}(#1)}
\def\coef#1.{\frac{(#1-1)}{#1}}
\def\vit#1.{D_{\langle #1 \rangle}}
\def\mm#1.{\overline {M}_{0,#1}}
\def\H1#1.{H^1(#1,{\ring #1.})}
\def\ac#1.{\overline {\mathbb F}_{#1}}

\def\adj#1.{\frac {#1-1}{#1}}
\def\spn#1.{\overline{#1}}
\def\ses#1.#2.#3.{0\to #1\to #2\to #3 \to 0}
\def\pek#1.#2.{\Cal P^{#1}(#2)}
\def\plk#1.#2.{\Cal P^{\leq #1}(#2)}
\def\ev#1.{\operatorname{ev_{#1}}}
\def\bminv#1.{(\nu_1,s_1;\nu_2,s_2;\dots ;\nu_{#1},s_{#1};\nu_{r+1})}
\def\zinv#1.{(\nu_1,s_1;\nu_2,s_2;\dots ;\nu_{#1},s_{#1};0)}
\def\iinv#1.{(\nu_1,s_1;\nu_2,s_2;\dots ;\nu_{#1},s_{#1};\infty)}
\def\map#1.#2.{#1 \longrightarrow #2}
\def\rmap#1.#2.{#1 \dasharrow #2}
\def\emb#1.#2.{#1 \hookrightarrow #2}


\def\Supp{\operatorname{Supp}}

\def\dim{\operatorname{dim}}

\def\mult{\operatorname{mult}}

\def\e{\Cal E}

\def\e1{E_1}
\def\e2{E_2}

\def\OO{\mathcal O}

\newcommand{\po}{\ar@{}[dr]|{\text{\pigpenfont R}}}
\newcommand{\pb}{\ar@{}[dr]|{\text{\pigpenfont J}}}

\newcommand\Q{{\mathbb{Q}}}
\newcommand\R{{\mathbb{R}}}
\newcommand{\exc}{\mathrm{Exc}}

\newtheorem{thm}{Theorem}[section]

\newtheorem{lem}[thm]{Lemma}

\theoremstyle{definition}
\newtheorem{defn}[thm]{Definition}

\theoremstyle{definition}
\newtheorem{rem}[thm]{Remark}

\theoremstyle{definition}

\begin{document}

\author{Christopher Hacon} \address{Department of Mathematics\\ University of Utah\\ 155 S 1400 E\\ Salt Lake City, Utah 84112} \email{hacon@math.utah.edu}
\thanks{Christopher Hacon was partially supported by the NSF research grants no: DMS-
1952522, DMS-1801851 and by a grant from the Simons Foundation; Award Number: 256202. The authors are grateful to J. Liu and O. Fujino for useful comments.}

\author{Lingyao Xie} \address{Department of Mathematics\\ University of Utah\\ 155 S 1400 E\\ Salt Lake City, Utah 84112} \email{lingyao@math.utah.edu}

\date{\today}

\title[ACC for glct for complex analytic spaces]{ACC for generalized log canonical thresholds for complex analytic spaces}

\begin{abstract}
    We show that generalized log canonical thresholds for complex analytic spaces satisfy the ACC and we characterize the accumulation points.
\end{abstract}
\maketitle

\section{Introduction}
Throughout this paper we work with pairs $(X,B)$ where $X$ is a normal complex analytic variety and $B$ is an effective $\R$-divisor such that $K_X+B$ is $\R$-Cartier. Understanding the singularities of such pairs plays a fundamental role in recent advances in the birational classification of algebraic varieties. One important measure of these singularities are the log canonical thresholds.
If $(X,B)$ is log canonical and $D$ is a non-zero effective $\R$-Cartier divisor, then the log canonical threshold is
\[{\rm lct}(X,B;D):={\rm sup}\{t|(X,B+tD)\ {\rm is\ log\ canonical}\}.\]
Understanding the behaviour of log canonical thresholds is essential in a variety of contexts such as, for example, the termination of flips, moduli problems, and K-stability (see, for example, \cite{Birkar07}, \cite{HMX18}, \cite{XZ21}).
Perhaps the most important result in this context is the solution, by Hacon-McKernan-Xu,  of Shokurov's ``ACC for LCT's conjecture" \cite{HMX14}
which we will now recall. 

 A set $I$ of non-negative real numbers satisfies the ascending chain condition or ACC (resp. the descending chain condition or DCC) if any non decreasing sequence $i_1\leq i_2\leq \ldots$ (resp. any non increasing sequence $i_1\geq i_2\geq \ldots$) is eventually constant.
Let $I,J$ be two DCC sets of nonnegative real numbers and $n$ a natural
number. We define \[\mathrm{LCT}_n(I,J):=\{{\rm lct}(X,B;D)|\dim X=n,\ {\rm coeff}(B)\in I,\ {\rm coeff}(D)\in J\}\]  to be the set of all log canonical thresholds of $n$-dimensional lc pairs $(X,B)$ with respect to divisors $D$ such that the coefficients of $B$ and $D$ belong to $I$ and $J$ respectively. When $X$ is quasi-projective, then by \cite[Theorem 1.1]{HMX14}, it follows that the set $\mathrm{LCT}_n(I,J)$ satisfies the ACC and that one can characterize the accumulation points under mild assumptions on the DCC sets $I,J$.
By \cite[Theorem 1.1]{HMX14}, it follows that if $I\subset [0,1]$, the only possible accumulation point of $I$ is $1$, and $I=I_+:=\{0\}\cup \{j=\sum _{k=1}^ri_k\in[0,1]|i_k\in I\}$, then the only accumulation points of $\mathrm{LCT}_n(I,\mathbb N)$ are $\mathrm{LCT}_{n-1}(I,\mathbb N)\setminus \{1\}$. 

Recently, generalized pairs have begun playing an increasingly more prominent role in birational geometry (see \cite{Birkar21} and references therein). It has become apparent that it is important to also study singularities in this context. The analog of the main results of \cite{HMX14}, for generalized pairs  where proven in  \cite{BZ16}, \cite{Liu18}, and have already found several important applications.

Naturally, it is also expected that the ACC for LCT's will play an important role in many of other contexts such as analytic varieties, foliated pairs, varieties in positive and mixed characteristics etc. (see eg. \cite{Fuj22b}, \cite{Chen22}, \cite{Sato21}).  
In view of recent progress in the minimal model program for analytic varieties (\cite{DHP22}, \cite{Fuj22a}), it is expected that the results of \cite{HMX14} should also hold for analytic varieties.
Fujino has in fact shown that Shokurov's ACC for LCT's conjecture holds for analytic varieties \cite{Fuj22b}. One interesting phenomenon that occurs in the analytic case (which does not happen in the algebraic case or for compact analytic varieties) is that if $\lambda = {\rm lct}(X,B;D)$, then it is possible that 
$(X,B+\lambda D)$ is klt i.e. there is no divisor $E$ over $X$ of log discrepancy $a(E;X,B+\lambda D)=0$ (see \cite[Example 1.3]{Fuj22b}). This is somewhat troubling as typically, many proofs by induction on the dimension involve studying the restriction of $K_X+B+\lambda D$ to an appropriate  divisor $E$ over $X$ of log discrepancy $a(E;X,B+\lambda D)=0$.

The purpose of this paper is to show that the results of \cite{HMX14} hold for generalized pairs on analytic varieties. In the process, we will show that log canonical thresholds $\lambda = {\rm lct}(X,B;D)$ are always computed by divisors of log discrepancy 0, for an auxiliary pair $(X',B'+\lambda D')$. We believe that results of this nature will find many applications in upcoming works on the minimal model program for K\"ahler varieties.

We will now give a more precise description of the main results of this paper.
Let $I,J$ be DCC sets and $n\in \mathbb N$, $f:X'\to X$ a proper biholomorphic map of analytic varieties, $(X, B + M)$, $M'$, $P'$, $P$, $D$ be as in Definition \ref{def: glct} so that
\begin{enumerate}
\item  $(X, B + M)$ is glc of dimension $n$,
\item $M' = \sum \mu_j M'_j$ where $M'_j$ are relatively nef Cartier divisors on $X'$, and  $\mu _j\in I$,
\item $P' = \sum \nu_kP'_k$ where $P'_k$ are relatively nef Cartier divisors on $X'$, and $\nu_k\in J$,
\item the coefficients of $B$ belong to $I$ and the coefficients of $D$ belong to $J$.
\end{enumerate}
Then $\mathrm{GLCT}_n(I,J)\subset \R$ is the set consisting of all the possible generalized log canonical thresholds $\mathrm{glct}(X,B+M;D+P)$ where $(X,B+M;D+P)$ are as above (see Definition \ref{def: glct}).

\begin{thm}\label{thm: glct(I,J) acc} The set 
$\mathrm{GLCT}_n(I,J)$ satisfies the ACC.
\end{thm}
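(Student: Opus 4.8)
The plan is to transplant the inductive argument of Hacon--McKernan--Xu \cite{HMX14}, in the generalized-pair form of Birkar--Zhang \cite{BZ16} and Liu \cite{Liu18}, into the complex analytic category using the minimal model program for analytic varieties of \cite{DHP22} and \cite{Fuj22a}; the one genuinely new ingredient is a device for circumventing the non-attainment phenomenon of \cite[Example 1.3]{Fuj22b}. I argue by contradiction and by induction on $n$ (the case $n=1$ being elementary), simultaneously carrying along the global ACC for generalized log canonical pairs. So suppose $\lambda_1<\lambda_2<\cdots$ is strictly increasing in $\mathrm{GLCT}_n(I,J)$, realized by data $(X_i,B_i+M_i;D_i+P_i)$ as in Definition \ref{def: glct} with $\mathrm{glct}(X_i,B_i+M_i;D_i+P_i)=\lambda_i$, and put $\lambda_\infty=\lim_i\lambda_i<\infty$.

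\emph{Reducing to a threshold computed at a point.} Generalized log canonicity is a local condition, so $(X_i,B_i+\lambda_iD_i+M_i+\lambda_iP_i)$ is glc while for every $t>\lambda_i$ it fails at some point of $X_i$. Either there is a point $x_i\in X_i$ at which it fails for all $t>\lambda_i$ --- in which case $\mathrm{glct}_{x_i}(X_i,B_i+M_i;D_i+P_i)=\lambda_i$ --- or the points of failure escape every relatively compact subset, and then localizing at one of them produces a value $\mathrm{glct}_{y}(X_i,B_i+M_i;D_i+P_i)\in(\lambda_i,\lambda_{i+1})$. After replacing each $\lambda_i$ by such a value and passing to a strictly increasing subsequence, I may assume $\lambda_i=\mathrm{glct}_{x_i}(X_i,B_i+M_i;D_i+P_i)$ for a point $x_i$ and, shrinking to a small Stein neighbourhood of $x_i$, that over the germ $(X_i,x_i)$ this threshold is computed by a prime divisor $E_i$ over $X_i$ with $a(E_i;X_i,B_i+\lambda_iD_i+M_i+\lambda_iP_i)=0$ and centre through $x_i$: over a germ the infimum defining $\mathrm{glct}_{x_i}$ is attained on any fixed high enough model, the centres having nowhere to escape to. This is exactly where Fujino's example is bypassed, at the price of passing to the auxiliary pair $(X_i,B_i+\lambda_iD_i)$ on the germ.

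\emph{Adjunction to a compact divisor and conclusion.} Now cut $X_i$ with general hyperplane sections through $x_i$: by inversion of adjunction for generalized pairs this alters neither $\lambda_i$, nor the coefficients of $B_i$ and $D_i$, nor the multiplicities $\mu_j$ and $\nu_k$, while each cut lowers by one both $\dim X_i$ and the dimension of the minimal generalized log canonical centre of $(X_i,B_i+\lambda_iD_i+M_i+\lambda_iP_i)$ through $x_i$. After finitely many cuts and a further subsequence with common dimension $n'$, that centre is $\{x_i\}$. Pass to a generalized dlt modification $\pi_i\colon Y_i\to X_i$; it extracts $E_i$, which now has centre $\{x_i\}$, so $E_i\subset\pi_i^{-1}(x_i)$ is compact and normal, and the crepant pullback $K_{Y_i}+B_{Y_i}+M_{Y_i}=\pi_i^*(K_{X_i}+B_i+\lambda_iD_i+M_i+\lambda_iP_i)$ has $B_{Y_i}$ with coefficients in a DCC set --- the dlt modification only introduces divisors of coefficient $1$, so these coefficients come from $I$, the sets $\{\lambda_i\nu:\nu\in J\}$ (which are DCC because $\{\lambda_i\}$ is bounded) and $\{1\}$. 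Generalized divisorial adjunction along $E_i$ then gives a compact generalized log canonical pair $(E_i,B_{E_i}+M_{E_i})$ of dimension $n'-1$ with
\[
K_{E_i}+B_{E_i}+M_{E_i}=(K_{Y_i}+B_{Y_i}+M_{Y_i})\big|_{E_i}\equiv 0,
\]
the numerical triviality being automatic because $E_i$ maps to a point; the coefficients of $B_{E_i}$ and the nef multiplicities of $M_{E_i}$ again lie in DCC sets depending only on $n$, $I$, $J$, $\lambda_\infty$, and $\lambda_i$ is encoded in this data (through the components of $D_i$ and $P_i$ responsible for the failure of glc-ness at $x_i$). Applying the global ACC for numerically trivial generalized log canonical pairs of dimension $n'-1$ --- the analytic form of \cite{BZ16}, \cite{Liu18}, available by the inductive hypothesis --- forces this data into a finite set, contradicting $\lambda_1<\lambda_2<\cdots$.

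\emph{Main obstacle.} The heart of the matter is the first reduction together with keeping the generalized pair structure --- the descent of the nef Cartier divisors $M'_j$ and $P'_k$ to high models --- intact through the hyperplane cuts, the dlt modification, and adjunction, all carried out analytically on the basis of \cite{DHP22} and \cite{Fuj22a}; the most delicate bookkeeping is to arrange the divisor $E_i$ and the adjunction so that $\lambda_i$ genuinely appears among coefficients or multiplicities controlled by the global ACC. Once that is in place, the remainder is a faithful analytic transcription of the algebraic arguments.
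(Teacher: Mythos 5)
Your overall scaffolding (handle non-attainment by localizing, produce a glc place, pass to a dlt-type model, restrict to something compact and numerically trivial, and quote the projective global ACC of \cite{BZ16}) parallels the paper, but the step you defer as ``delicate bookkeeping'' is not bookkeeping: it is the mathematical core, and it is missing. The existence of a place $E_i$ with $a(E_i;X_i,B_i+\lambda_iD_i+M_i+\lambda_iP_i)=0$ does not encode $\lambda_i$ in DCC-controlled data, since it only gives $a(E_i;X_i,B_i+M_i)=\lambda_i\cdot\mathrm{mult}_{E_i}\,f^*(D_i+P_i)$, and neither side lies in a prescribed DCC set. After adjunction to your compact divisor, nothing guarantees that any coefficient of $B_{E_i}$, or any nef multiplicity of $M_{E_i}$, has the form $(m-1+f+\lambda_i c)/m$ or $g+\lambda_i\nu_k$ with $c,\nu_k$ coefficients of $D_i,P_i'$: if the strict transform of $D_i$ and the moduli part restrict numerically trivially to the chosen fiber, the restricted pair only sees $D(I)$ and the global ACC yields no contradiction at all. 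This is precisely what Lemma \ref{lem: glct make num trivial g-pair} supplies: one proves that $(\hat D_Y+P_Y)$ is \emph{not} numerically trivial on the relevant fiber by applying \cite[Lemma 3.6.2]{BCHM10} to the effective exceptional divisor $\pi^*(D+P)-\hat D_Y-P_Y$, extracting a component with a covering family of curves on which it is negative, and this is what forces alternative (i) or (ii) and hence the finiteness contradiction. Your proposal stops exactly where this argument has to begin.

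The reduction by hyperplane sections through $x_i$ is also false as stated. Take $X=\C^3$, $B=0$, $D=\{x^2=y^2z\}$: here $(X,D)$ is lc, $\mathrm{lct}_0(X;D)=1$, and the minimal lc center through $0$ is the $z$-axis, of positive dimension; yet for every general hyperplane $H$ through $0$ the restriction $D|_H$ is a cuspidal cubic, so $\mathrm{lct}_0(H;D|_H)=5/6<1$. Thus cutting through the specific point produced by your dichotomy (which may well be such a special point of the center, as $0$ is here) can strictly drop the threshold even when the minimal center is positive-dimensional; preserving it requires cutting at an \emph{analytically general} point of a glc center, plus an inversion-of-adjunction/Bertini statement for generalized pairs in the analytic category that you neither prove nor cite. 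Two further holes in the same step: a gdlt modification need not extract any divisor with center $\{x_i\}$ (for an already gdlt pair such as $(\C^2,\{x=0\}+\{y=0\})$ with the origin as minimal center it can be the identity, leaving no compact divisor over $x_i$), so you must arrange the extraction of a specific lc place as in item (4) of the proof of Lemma \ref{lem: glct make num trivial g-pair}; and the codimension-one case of $V_i$ degenerates under your scheme and has to be treated by the direct coefficient computation $1=\lambda_i'+t_i\lambda_i$ as at the start of Theorem \ref{thm: t_i is acc}. Note the paper sidesteps hyperplane cuts entirely by restricting to the general fiber of $F\to V$, which is automatically projective and numerically trivial, and only uses surface cuts internally to compute coefficients.
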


Moreover, we  give a precise description of the accumulation points of  generalized log canonical thresholds as in \cite[Theorem 1.11]{HMX14} and \cite[Theorem 1.7]{Liu18}:

\begin{thm}\label{thm: accumulation pts come from lower dim}
If 1 is the only accumulation point of the DCC set $I\subset[0,1]$ and $1\in I=I_+$, then the accumulation points of $\mathrm{GLCT}_n(I):=\mathrm{GLCT}_n(I,\mathbb{N})$ belong to $\mathrm{GLCT}_{n-1}(I)$.
\end{thm}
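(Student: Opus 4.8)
The strategy is to reduce the generalized-pair statement on analytic varieties to the known algebraic statements, namely \cite[Theorem 1.11]{HMX14} and \cite[Theorem 1.7]{Liu18}, together with the ACC result Theorem \ref{thm: glct(I,J) acc} already established above. First I would take a sequence $\lambda_i \in \mathrm{GLCT}_n(I)$ of distinct thresholds converging to some $\lambda_\infty$; after passing to a subsequence we may assume the $\lambda_i$ are strictly monotone, and by Theorem \ref{thm: glct(I,J) acc} (ACC) they must be strictly decreasing, so $\lambda_\infty < \lambda_i$ for all $i$. Each $\lambda_i = \mathrm{glct}(X_i, B_i + M_i; D_i + P_i)$ for data as in Definition \ref{def: glct}, with $(X_i, B_i + M_i + \lambda_i(D_i + P_i))$ glc but $(X_i, B_i + M_i + (\lambda_i+\varepsilon)(D_i+P_i))$ not glc for any $\varepsilon > 0$.

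The key step is the one flagged in the introduction: although in the analytic setting $\lambda_i$ need not be computed by a divisor of log discrepancy $0$ over $X_i$ itself, we claim it is computed by such a divisor $E_i$ for an \emph{auxiliary} pair $(X_i', B_i' + \lambda_i D_i')$ obtained by a suitable local/birational modification (this is precisely the mechanism the authors announce they will establish in proving Theorem \ref{thm: glct(I,J) acc}, so I would invoke it here). Passing to the center $Z_i$ of $E_i$ and taking a dlt-type modification or adjunction, one restricts $K + B + M + \lambda_i(D+P)$ to $E_i$ to produce a generalized pair $(E_i, B_{E_i} + M_{E_i})$ of dimension $n-1$ with coefficients of the boundary part in $I_+ = I$ (here one uses the hypothesis $I = I_+$, $1 \in I$, and the ACC-type structure of adjunction coefficients — this is the standard ``different'' computation, analytic-local near the generic point of $Z_i$, exactly as in \cite{HMX14}, \cite{Liu18}). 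The essential numerical point is that the threshold $\lambda_i$ is recovered on $E_i$: one shows $\lambda_i \in \mathrm{GLCT}_{n-1}(I)$, or more precisely that $\lambda_\infty$ is forced to equal a limit of such $(n-1)$-dimensional thresholds. Since $\mathrm{GLCT}_{n-1}(I)$ satisfies ACC (Theorem \ref{thm: glct(I,J) acc} in dimension $n-1$) it is closed from above in the relevant range, and a boundedness/global-to-local argument — as in the proof of Theorem \ref{thm: glct(I,J) acc} — yields $\lambda_\infty \in \mathrm{GLCT}_{n-1}(I)$.

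More concretely, I would run the argument as follows. First, establish the ``$\lambda$ is computed by a log discrepancy $0$ divisor of an auxiliary pair'' statement and use it to find, for each $i$, a prime divisor $E_i$ over $X_i$ with $a(E_i; X_i, B_i + M_i + \lambda_i(D_i + P_i)) = 0$ and $D_i + P_i$ not identically zero along $Z_i := c_{X_i}(E_i)$. Second, perform generalized adjunction to $E_i$ to get $(E_i, B_{E_i} + M_{E_i}; D_{E_i} + P_{E_i})$ with $\dim E_i = n-1$ and all coefficients in $I$ respectively $\mathbb{N}$, and such that $\mathrm{glct}(E_i, B_{E_i}+M_{E_i}; D_{E_i}+P_{E_i}) \geq \lambda_i$ with equality forced in the limit (this uses that $\lambda_i$ is an accumulation-free strictly decreasing sequence, so one cannot push the threshold up on $E_i$ without contradicting glc-ness on $X_i$ via inversion of adjunction for generalized pairs). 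Third, conclude $\lambda_\infty = \lim \lambda_i' $ for some $\lambda_i' \in \mathrm{GLCT}_{n-1}(I)$, and since $\mathrm{GLCT}_{n-1}(I)$ is itself ACC and $\lambda_i' \searrow \lambda_\infty$, deduce $\lambda_\infty \in \mathrm{GLCT}_{n-1}(I)$ (closedness from above along a decreasing sequence inside an ACC set, as $\lambda_\infty$ is realized by a limiting configuration).

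The main obstacle, and the genuinely new analytic ingredient, is precisely the non-algebraic phenomenon highlighted in the introduction: the threshold may fail to be computed by any divisor over $X$ itself. Handling this requires replacing $(X, B+M; D+P)$ by an auxiliary pair $(X', B' + M'; D' + P')$ — carried out locally on a relatively compact open subset, since analytic varieties are not quasi-projective and one lacks a global MMP — and checking that the auxiliary construction does not alter membership in $\mathrm{GLCT}_n(I)$ nor the value $\lambda_i$, while producing the desired log discrepancy $0$ divisor. Once this local auxiliary-pair reduction is in place, the remainder is a faithful transcription of the \cite{HMX14}/\cite{Liu18} induction (generalized adjunction, inversion of adjunction, and the ACC in dimension $n-1$) into the analytic-local setting, which the recent analytic MMP results \cite{DHP22}, \cite{Fuj22a}, \cite{Fuj22b} make available.
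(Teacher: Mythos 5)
Your reduction to relatively compact open subsets (so that the threshold is actually computed by a glc center, as in Remark \ref{rem: lct in relative compact set create lc centers}) and the use of a dlt-type modification to extract a log discrepancy $0$ divisor are in line with what the paper does. But from there your argument diverges from the paper's and has two genuine gaps. First, your concluding step is a non sequitur: you deduce $\lambda_\infty\in\mathrm{GLCT}_{n-1}(I)$ from the fact that $\mathrm{GLCT}_{n-1}(I)$ satisfies the ACC, calling this ``closedness from above along a decreasing sequence.'' The ACC only forbids accumulation from below; it says nothing about limits of strictly decreasing sequences (e.g.\ $\{1/m\mid m\in\mathbb N^+\}$ satisfies the ACC but does not contain its accumulation point $0$). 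Realizing the limit $\lambda_\infty$ as an honest threshold in dimension $n-1$ is precisely the hard content of the theorem, and your parenthetical ``as $\lambda_\infty$ is realized by a limiting configuration'' is exactly what remains to be proved. Second, the intermediate claim that generalized adjunction plus inversion of adjunction gives $\mathrm{glct}(E_i,B_{E_i}+M_{E_i};D_{E_i}+P_{E_i})\ge\lambda_i$ ``with equality forced in the limit'' is asserted, not argued, and it is not the mechanism of \cite{HMX14} or \cite{Liu18}: in general the threshold does not descend to a threshold on an lc place, and the correct route records instead how $\lambda_i$ enters the coefficients of a numerically trivial lower-dimensional pair.

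For comparison, the paper's proof is a short reduction to the algebraic case: after the relatively compact reduction, Lemma \ref{lem: glct make num trivial g-pair} produces a \emph{projective} generalized pair $(S,\Delta+N)$ of dimension $\le n-1$ with $K_S+\Delta+N\equiv 0$, whose boundary coefficients lie in $D(I)$ and explicitly involve $c_i$ (coefficients of the form $\frac{m-1+\alpha+c_i c}{m}$, or nef part coefficients $g+c_i\nu$). This places each $c_i$ in the set $\mathcal N_d(I,\mathbb N,\mathbb N)$ of \cite[Definition 2.18]{Liu18}, and since $(S,\Delta+N)$ is projective one is in the algebraic setting, so the delicate limiting analysis of the accumulation points — the step your sketch replaces with the ACC appeal, and where the hypotheses $I=I_+$, $1\in I$, and accumulation only at $1$ are really used — is imported wholesale from \cite{Liu18}. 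If you want to carry out your adjunction-based induction directly in the analytic category instead, you would have to reprove that limiting analysis (and inversion of adjunction for generalized pairs) there, which your proposal does not do.
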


\section{Preliminaries}

Let $X$ be a normal complex analytic space. A prime divisor $P$ on $X$ is an irreducible and reduced closed subvariety of codimension one. An $\R$-divisor (resp. $\Q$-divisor) $D$  on $X$ is a locally finite formal sum $D=\sum d_iD_i$ of distinct prime divisors $D_i$ with 
with coefficients $d_i\in \R$ (resp. $d_i\in \Q$). If for some point $x\in X$ there is a neighborhood $x\in U\subset X$ such that the restriction $D|_U$ of the $\R$-divisor (resp. $\Q$-divisor) $D$ is a finite $\R$-linear  (resp. $\Q$-linear) combination of Cartier divisors, then we say that $D$ is $\R$-Cartier (resp. $\Q$-Cartier) at $x\in X$. If $D$ is $\R$-Cartier (resp. $\Q$-Cartier) at every $x\in X$ then we say that $D$ is $\R$-Cartier (resp. $\Q$-Cartier).

\begin{defn}\label{def: g-pairs}
We say that $(X,B+M)$ is a generalized pair if there is a proper biholomorphic map $f:X'\to X$ and an $f$-nef $\R$-Cartier divisor $M'$ such that
\begin{enumerate}
    \item $X'$ and $X$ are normal,
    \item $M=f_*M'$ and $B\ge 0$,
    \item $K_X+B+M$ is $\R$-Cartier.
\end{enumerate}
We call $B$ the boundary part and $M$ the nef part of the generalized pair. We can always replace $X'$ by a higher model that factors through $f$, and $M'$ by its pullback. 

We can write 
$$
K_{X'}+B'+M'=f^*(K_X+B+M)
$$
and we say that the generalized pair $(X,B+M)$ is generalized log canonical (glc) at $x\in X$ if there is a neighborhood $x\in U\subset X$ such that $(X',B')|_{U}$ is sub-lc (\cite[Remark 3.2]{Fuj22a}) and is generalized kawamata log terminal (gklt) at $x\in X$ if there is a neighborhood $x\in U\subset X$ such that $(X',B')|_{U}$ is sub-klt (\cite[Remark 3.2]{Fuj22a}). We can also define $a(E,X,B+M):=a(E,X',B')$ for any divisor over $X$.
We say that $Z$ is a log canonical center (resp. log canonical place) of a $(X,B+M)$ glc pair if $Z$ is the image of an lc center of $(X',B')$ (resp. a log canonical place of $(X',B')$).  

We say that a glc pair $(X,B+M)$ is generalized divisorially log terminal (gdlt) if we can choose $f:X'\to X$ (in the definition) to be a log resolution of $(X,B)$ such that the log discrepancy is $a(E,X,B+M)>0$ for every $f$-exceptional divisor $E$.

\end{defn}

\begin{defn} A set $I\subset \R$ satisfies the ACC (resp. DCC) if any non-decreasing (resp. non-increasing) sequence $I_k\in I$ is eventually constant. We let $\partial I$ be the set of accumulation points of $I$ and $\bar I =I\cup \partial I$.
If $I\subset [0,+\infty),$ then \[I_+=\{0\}\cup \{\sum _{k=1}^li_k\in[0,1]~|~i_k\in I\}, \ and\]
\[D(I)=\{a \leq  1|a = \frac{m-1+f}
m ,\  m \in \mathbb  N^+,\  f \in I_+\}.\]
If $I\subset [0,1]$, then we let \[\Phi(I)=\{1 -\frac r
m |r \in I, m \in  \mathbb N^+\}.\]
\end{defn}
\begin{defn}\label{def: glct} (Generalized log canonical thresholds for complex analytic spaces). Let $(X, B+M)$ be
a generalized log canonical pair and let $D$ be an effective $\R$-Cartier $\R$-divisor on $X$ and $P=f_*P'$ where $P'$ is a nef divisor on $X'$. Let $c$ be the supremum of all real numbers such that $(X, B +M+ t(D+P))$ is generalized log canonical, then
$c$ is called the generalized log canonical threshold of $D+P$ with respect to $(X, B+M)$ and is  denoted
by ${\rm lct}(X, B+M; D+P)$. \end{defn}
\begin{lem} If $(X, B+M)$ and $D+P$ are as above, then $(X, B+M + c(D+P))$ is generalized log canonical.
\end{lem}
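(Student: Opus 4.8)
The plan is to show that the set $T=\{t\ge 0\mid (X,B+M+t(D+P))\text{ is glc}\}$ equals the closed interval $[0,c]$, where $c=\sup T$; in particular $c\in T$, which is exactly the assertion. Note $0\in T$ since $(X,B+M)$ is glc, and we may assume $c>0$, as for $c=0$ the pair $(X,B+M+c(D+P))$ is just $(X,B+M)$ and there is nothing to prove; if $T$ turns out to be unbounded, i.e.\ $c=+\infty$, the statement is vacuous.

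First I would record that $D+P$ is automatically $\R$-Cartier. Pick $t_0\in T$ with $t_0>0$, which exists because $c=\sup T>0$. Then $(X,B+M+t_0(D+P))$ is a generalized pair, so $K_X+B+M+t_0(D+P)$ is $\R$-Cartier; subtracting the $\R$-Cartier divisor $K_X+B+M$ shows $t_0(D+P)$, hence $D+P$, hence also $P=(D+P)-D$, is $\R$-Cartier. Consequently, for every $t\ge 0$ the divisor $K_X+B+M+t(D+P)$ is $\R$-Cartier and $(X,B+M+t(D+P))$ is a generalized pair, with nef part $M'+tP'$ on a common model. After replacing $X'$ by a higher model we may assume a single proper biholomorphic $f\colon X'\to X$ realizes $M=f_*M'$ and $P=f_*P'$ with $M'$ and $P'$ both $f$-nef; choose a log resolution $h\colon Y\to X'$ of $(X',\Supp(B'+f^*(D+P)+P'))$, set $g=f\circ h$, $M_Y=h^*M'$, $P_Y=h^*P'$ (both nef over $X$), and write, for each $t\ge 0$,
\[
K_Y+\Theta_t+(M_Y+tP_Y)=g^*\bigl(K_X+B+M+t(D+P)\bigr),
\]
so that $\Theta_t=\Theta_0+t\,\Xi$ is affine in $t$, where $\Theta_0$ is defined by $K_Y+\Theta_0+M_Y=g^*(K_X+B+M)$ and $\Xi=g^*(D+P)-P_Y$; in particular $\Supp\Theta_0\cup\Supp\Xi$ lies in one fixed reduced simple normal crossings divisor $\mathcal S$ on $Y$, independent of $t$.

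Next I would read glc-ness off the coefficients of $\Theta_t$. Since $g$ is a log resolution and $M_Y+tP_Y$ is nef over $X$, the pair $(X,B+M+t(D+P))$ is glc at a point $x\in X$ if and only if $\operatorname{coeff}_E\Theta_t\le 1$ for every component $E$ of $\mathcal S$ with $x\in g(E)$ — using that $g(E)$ is closed, as $g$ is proper — and therefore $(X,B+M+t(D+P))$ is glc if and only if $\operatorname{coeff}_E\Theta_t\le 1$ for each of the finitely many components $E$ of $\mathcal S$. Each such condition is $\operatorname{coeff}_E\Theta_0+t\,\operatorname{coeff}_E\Xi\le 1$, a closed (indeed affine) condition on $t$; hence $T$ is an intersection of finitely many closed subsets of $[0,\infty)$. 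Being nonempty, this intersection is either the ray $[0,+\infty)$ — in which case $c=+\infty$ and the statement is vacuous — or a closed interval $[0,c]$, whence $c\in T$, as desired.

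I expect the only genuine work to be the bookkeeping in the second step: passing to a common model and tracking the nef part $P$, which a priori lives only on $X'$, so as to see that the boundary part $\Theta_t$ on $Y$ is really affine in $t$ with a fixed simple normal crossings support, together with the check that in the analytic setting ``glc'' is indeed the local condition used above, intersected over all $x\in X$ (cf.\ Definition~\ref{def: g-pairs}). Granting this, the lemma reduces to the elementary fact that ``$\le 1$'' is a closed condition on the parameter $t$.
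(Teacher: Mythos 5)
Your underlying idea --- that for each divisor $E$ over $X$ the log discrepancy $a(E;X,B+M+t(D+P))$ is an affine function of $t$, so that generalized log canonicity is a closed condition on $t$ and the set $T$ therefore contains its finite supremum --- is exactly the content of the paper's (one-line) proof, which simply says the statement follows from the definition. However, your implementation has a genuine gap in this analytic setting: you pass to a global log resolution $h\colon Y\to X'$ of $(X',\Supp(B'+f^*(D+P)+P'))$ and assert that $\Supp\Theta_0\cup\Supp\Xi$ sits inside \emph{one fixed reduced snc divisor $\mathcal S$ with finitely many components}, reducing glc-ness to finitely many affine inequalities. For a non-compact complex analytic $X$ this is precisely what may fail: as the paper stresses in Remark \ref{rem: lct in relative compact set create lc centers} (quoting Fujino's Example 1.3), log resolutions need not exist globally, and $B$ and $D$ are only locally finite sums, so they may have infinitely many components; consequently neither the existence of your $Y$ and $\mathcal S$ nor the finiteness of the set of conditions is justified, and the step ``$T$ is an intersection of finitely many closed subsets'' does not stand as written.

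The repair is to drop the resolution and the finiteness altogether. By Definition \ref{def: g-pairs}, glc-ness of $(X,B+M+t(D+P))$ means that for every point $x$ and every prime divisor $E$ over a neighborhood of $x$ (computed on any model dominating $X'$) one has $a(E;X,B+M+t(D+P))\ge 0$; since $K_X+B+M$ and $D+P$ are $\R$-Cartier (your reduction showing $D+P$, hence $P$, is $\R$-Cartier when $c>0$ is fine) and the nef part $M'+tP'$ is linear in $t$, each such log discrepancy is affine in $t$, so each condition cuts out a closed subset of $[0,\infty)$ containing $0$, namely $[0,s_E]$ or $[0,\infty)$. An \emph{arbitrary} intersection of such sets is again $[0,\inf_E s_E]$ or $[0,\infty)$, hence closed, so $T$ is a closed interval (or ray) and $c=\sup T\in T$ whenever $c<\infty$. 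This argument needs no log resolution and no finiteness, and is what ``follows directly from the definition'' amounts to.
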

\begin{proof}
This follows directly from the definition.
\end{proof}
\begin{rem}\label{rem: lct in relative compact set create lc centers}
Note that the above definition differs from the one in \cite{Fuj22b}
as there does not necessarily exist a
non-kawamata log terminal center of   $(X, B+M + c(D+P))$.
The issue is that $X$ may not be compact. In this case, we may not have a log resolution, and the divisor $D$ may have infinitely many components see \cite[Example 1.3]{Fuj22b}. If however $X$ is (relatively) compact, then log resolutions exist, the two definitions agree, and we always have a log canonical center of $(X, B+M + c(D+P))$ see \cite[Remark 1.4.]{Fuj22b}.\end{rem}

The next theorem is the analogue of dlt-blowups of generalized pairs in the complex analytic setting:

\begin{thm}(Dlt-blowup)\label{thm: dlt-blowup for g-pairs}
Let $X$ be a normal complex variety and $(X,B+M)$ a generalized pair as in Defintion \ref{def: g-pairs}. 
Let $U$ be any relatively compact
Stein open subset of $X$ and let $V$ be any relatively compact open subset of $U$. Then we can
take a Stein compact subset $W$ of $U$ such that $\Gamma(W,\OO_X)$ is noetherian, $V\subset W$, and after
shrinking $X$ around $W$ suitably, we can construct a projective bi-meromorphic morphism $g:Y\to X$ from a normal complex variety $Y$ with the following properties:
\begin{enumerate}
\item $Y$ is $\Q$-factorial over $W$,
\item $a(E,X,B+M)\le0$ for every $g$-exceptional divisor $E$ on $Y$,
\item $(Y,B_Y^{\le1}+M_Y)$ is gdlt, where $K_Y+B_Y+M_Y=f^*(K_X+B+M)$.
\end{enumerate}

\end{thm}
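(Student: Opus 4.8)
The statement is the complex analytic analogue of the dlt blowup for generalized pairs, so the plan is to run the algebraic argument (as in \cite{BZ16}, \cite{Liu18}) inside the analytic minimal model program of \cite{DHP22}, \cite{Fuj22a}.

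\emph{Step 1: a noetherian local model.} First I would choose a Stein compact subset $W$ of $U$ with $V\subset W$ and $\Gamma(W,\OO_X)$ noetherian, which exists by the standard theory of Stein compact subsets. After replacing $X$ by a suitable relatively compact Stein neighbourhood of $W$, I then have at my disposal resolution of singularities in the analytic category, as well as the cone and contraction theorems, the existence of flips, and the termination of the relevant MMP for gdlt generalized analytic pairs that are $\Q$-factorial over $W$ and projective over $X$. Making these inputs available is the sole purpose of the noetherian hypothesis, and everything below takes place after this shrinking.

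\emph{Step 2: the construction.} Take a projective bimeromorphic log resolution $h\colon X'\to X$ of $(X,\Supp B)$ on which the nef part descends, and write $K_{X'}+B'+M'=h^*(K_X+B+M)$ with $M'$ nef over $X$. Let $\Gamma'$ be the sum of the strict transform of $B^{\le1}$ and of every $h$-exceptional prime divisor taken with coefficient $1$. Then $(X',\Gamma'+M')$ is log smooth, hence $\Q$-factorial over $W$ and gdlt, and the divisor
\[
G:=K_{X'}+\Gamma'+M'-h^*(K_X+B+M)
\]
has coefficient $a(E,X,B+M)$ along every $h$-exceptional prime divisor $E$ and non-positive coefficient along every non-exceptional prime divisor. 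Now run a $(K_{X'}+\Gamma'+M')$-MMP over $X$ with scaling of a divisor ample over $X$. Since $X'$ is birational over $X$ this MMP ends with a minimal model rather than a Mori fibre space, and by Step 1 it terminates, producing a projective bimeromorphic morphism $g\colon Y\to X$ with $K_Y+\Gamma_Y+M_Y$ nef over $X$, where $\Gamma_Y$, $M_Y$ are the pushforwards of $\Gamma'$, $M'$.

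\emph{Step 3: verification.} Property (1) and the gdlt part of (3) hold because $\Q$-factoriality over $W$ and the gdlt property are preserved at every step of the MMP, starting from the log smooth pair $(X',\Gamma'+M')$. Put $G_Y:=K_Y+\Gamma_Y+M_Y-g^*(K_X+B+M)$; then $G_Y$ is $g$-nef and $g_*G_Y=B^{\le1}-B\le0$, so the negativity lemma yields $G_Y\le0$. The only prime divisors that could occur in $G_Y$ with positive coefficient are the strict transforms of $h$-exceptional divisors $E$ with $a(E,X,B+M)>0$; since $G_Y\le0$, all of these are contracted by the MMP, and hence every $g$-exceptional divisor on $Y$ satisfies $a(E,X,B+M)\le0$, which is (2). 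Finally, comparing coefficients in the identity $\Gamma_Y-B_Y=G_Y$, where $K_Y+B_Y+M_Y=g^*(K_X+B+M)$, shows $\Gamma_Y=B_Y^{\le1}$, so that $(Y,B_Y^{\le1}+M_Y)=(Y,\Gamma_Y+M_Y)$ is gdlt; this is (3).

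\emph{Main obstacle.} The algebraic skeleton above is routine once the analytic tools are in place, so the real content is precisely the availability over $X$ of those tools: the analytic log resolution, and above all the cone/contraction theorems, the existence of flips, and the termination of the MMP with scaling for gdlt generalized \emph{analytic} pairs that are $\Q$-factorial only over the Stein compact set $W$. This is exactly the machinery of \cite{DHP22}, \cite{Fuj22a}, and the noetherianity of $\Gamma(W,\OO_X)$ is what lets the algebraic proofs be imported. A secondary point requiring care, when $(X,B+M)$ is far from glc (so $B$ may have coefficients $>1$ and $G$ acquires a genuine non-exceptional part), is that the negativity-lemma bookkeeping in Step 3 must still be carried out exactly as above; one may alternatively reduce first to the glc case, which is all that is needed for the applications in this paper.
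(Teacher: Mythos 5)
Your skeleton (log resolution, exceptional divisors added to the boundary, a relative MMP over $X$, negativity to force contraction of the exceptional divisors of positive log discrepancy) is the same as the paper's, but the sentence ``and by Step 1 it terminates'' is a genuine gap, not a routine input. Termination of the MMP with scaling for (generalized) dlt pairs is not known in general even in the algebraic category, and neither \cite{DHP22} nor \cite{Fuj22a} supplies it for gdlt analytic pairs; listing it among the tools made available by the noetherianity of $\Gamma(W,\OO_X)$ overstates what those references prove. The paper's proof is structured precisely to avoid termination: it runs the $(K_{X'}+\Delta+M')$-MMP with scaling of an ample divisor $A$ over $X$, over the Stein compact $W$, and invokes \cite[Lemma 13.7]{Fuj22a}, which only asserts that after \emph{finitely many} steps $K_{X_k}+\Delta_k+M_k\in\overline{\mathrm{Mov}}(X_k/X;W)$; the negativity lemma \cite[Lemma 4.9]{Fuj22a} is then applied at that finite stage, where movability over $W$ suffices to give $-F_k\ge 0$ over $W$, hence over a neighbourhood of $W$ after shrinking. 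Your Step 3, by contrast, needs $K_Y+\Gamma_Y+M_Y$ to be nef over $X$, i.e. an honest minimal model, which is exactly what cannot be guaranteed; the argument must be rewritten so that negativity is applied at the movable stage rather than at a terminal model.

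A second, related gap: you run the MMP directly for the generalized analytic pair $(X',\Gamma'+M')$, but the analytic MMP of \cite{Fuj22a} is developed for usual pairs, so each step needs a justification. The paper supplies it by choosing a general ample (over $X$) divisor $A$, noting that $tA+M'$ is $f$-ample for every $t>0$, and writing $K_{X'}+\Delta+tA+M'\sim_{\Q,f}K_{X'}+\Delta^t$ with $(X',\Delta^t)$ klt over $W$, so that the $(K_{X'}+\Delta+M')$-MMP with scaling of $A$ is realized by MMP steps for honest pairs. Apart from these two points, your coefficient bookkeeping is sound: your choice of coefficient one on every exceptional divisor (versus the paper's coefficient $\epsilon$ on the exceptional divisors of positive log discrepancy) serves the same purpose, and the verification of (1)--(3) would go through once the argument is repaired along the paper's lines, provided you also thread through the analytic caveats that $\Q$-factoriality, nefness and the conclusion of negativity are only asserted over $W$ and after shrinking $X$ around $W$.
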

\begin{proof}
We will freely shrink $X$ suitably without mentioning it explicitly.
By taking a resolution of singularities, we can assume that $f:X'\to X$ is a projective bi-meromorphic morphism such that $f^{-1}(U)$ is smooth and $\exc(f)\cup\Supp(f^{-1}B)$ is a simple normal crossing divisor on $f^{-1}(U)$. Let $E$ be any $f$-exceptional
divisor such that $f(E)\cap U\neq\emptyset$. Then, by enlarging $V$ suitably, we may assume that $f(E)\cap V\neq\emptyset$. By \cite[Lemma 2.16]{Fuj22a}, we can take a Stein compact subset $W$ of $U$ such that $\Gamma(W,\OO_X)$ is noetherian and that $V\subset W$.

Write $K_{X'}+B'+M'=f^*(K_X+B+M)$ as in Definition \ref{def: g-pairs} and let $B'=\sum a_iD_i$ be the irreducible decomposition. Now we define a boundary 
$$
\Delta=\sum_{0<a_i<1}a_iD_i+\sum_{a_i\ge1}D_i+\epsilon \sum E_i~,~1\gg\epsilon>0,
$$
where $E_i$ are all the $f$-exceptional divisors such that $a(E,X,B+M)>0$.
Then we have $K_{X'}+\Delta+M'=f^*(K_X+B+M)+F$ and we see that $-f_*F$ is effective. Let $A$ be a general ample (over $X$) $\Q$-divisor such that $(X',\Delta+A+M')$ is gdlt and $K_{X'}+\Delta+A+M'$ is nef over $W$. Notice that for any $t>0$, $tA+M'$ is $f$-ample. Therefore (over $W$) we can write $K_{X'}+\Delta+tA+M'\sim_{\Q,f}K_{X'}+\Delta^t$ for some klt pair $(X',\Delta^t)$. Then by \cite{Fuj22a} we can run a $(K_{X'}+\Delta+M')$-MMP with scaling of $A$ over $X$ over $W$.

Let $(X_0,\Delta_0+M'):=(X',\Delta+M')$, $F_0:=F$, $M_0:=M'$ and $A_0:=A$. Then we obtain a sequence of divisorial contractions and flips:
$$
(X_0,\Delta_0+M_0)\stackrel{\phi_0}{\dashrightarrow}(X_1,\Delta_1+M_1)\stackrel{\phi_1}{\dashrightarrow}\cdots\stackrel{\phi_{i-1}}{\dashrightarrow}(X_i,\Delta_i+M_i)\stackrel{\phi_i}{\dashrightarrow}
$$
where $\Delta_i,M_i,F_i,A_i$ are the corresponding birational transforms. We also have the scaling numbers
$$
1\ge\lambda_0\ge\lambda_1\ge\cdots\ge\lambda_i\ge\cdots\ge0
$$
such that $K_{X_i}+\Delta_i+M_i+\lambda_iA_i$ is nef over $W$. Then by \cite[Lemma 13.7]{Fuj22a} we know that $K_{X_k}+\Delta_k+M_k\in\overline{\mathrm{Mov}}(X_k/X;W)$ for some $k\ge0$. Thus by the negativity lemma (cf. \cite[Lemma 4.9]{Fuj22a}) we have $-F_k\ge0$ over $W$. Hence $-F_k$ is effective over some open neighborhood of $W$. Let $Y:=X_k$, $g:X_k\to X$, $M_Y=M_k$ and $K_Y+B_Y+M_Y=g^*(K_X+B+M)$. Then $(Y,B_Y+M_Y)$ satisfies (1-3) above.
\end{proof}

\section{Proof of the main theorems}

\begin{lem}\label{lem: glct make num trivial g-pair}
We fix a positive integer $n$ and a set $1\in I\subset[0,\infty]$. Assume that $f:X'\to X$ is a projective morphism, and we have $\R$-divisors $B,D\ge0$ on $X$ and nef $\R$-divisors $M',P'$ on $X'$ such that
\begin{enumerate}
\item $(X,B+M)$ and $(X,B+D+M+P)$ are $(n+1)$-dimensional glc pairs with data given by $M'$ and $P'$, where $M=f_*M', P=f_*P'$.
\item $M'=\sum \mu_jM'_j$, where $M'_j$ are relatively nef Cartier divisors and $\mu_j\in I$.
\item $P'=\sum \nu_kP'_k$, where $P'_k$ are relatively nef Cartier divisors and $\nu_k\in I$.
\item The coefficients of $B,D,M'+P'$ belong to $I$.
\end{enumerate}
We further assume that there
exists a non-gklt center $V$ of $(X,B+D+M+P)$ such that $V$ is not a non-gklt center of $(X,B+M)$ and $\dim V\le\dim X-2$.
Then we can construct a generalized log canonical pair $(S,\Delta+N)$ with $S'\to S$ and $N'$ as in Definition \ref{def: g-pairs} such that
\begin{enumerate}
\item $S$ is a projective variety of
dimension at most $n$,
\item the coefficients of $\Delta$ belong to $D(I)$,
\item $K_S+\Delta+N$ is numerically trivial,
\item $N'=\sum a_iN'_i$, where $N'_i$ are relatively nef Cartier divisors and $a_i\in I$, and
\end{enumerate}
at least one of the following happens: 
\begin{itemize}
\item[(i)]Some component of $\Delta$ has coefficient of the form
$$
\frac{m-1+\alpha+c}{m}
$$
where $m$ is a positive integer, $\alpha\in I_+$, and $c\in I$ is the coefficient of some component of $D$ or $P'$.
\item[(ii)] $N'_i$ is not numerically trivial for some $i$ and $a_i=g+\nu_k$, where $g\in I$ and $\nu_k>0$ is a coefficient of $P'$.
\end{itemize}
\end{lem}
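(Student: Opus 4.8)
The plan is to adapt the inductive argument of \cite[\S4--5]{HMX14}, and of its generalized-pair analogue in \cite{Liu18} (compare also \cite{BZ16}), to the analytic setting: ``dlt modification'' is replaced by Theorem~\ref{thm: dlt-blowup for g-pairs}, ``general hyperplane section'' by a Bertini-type argument for the morphisms that occur, and the analytic subtleties ($D$ with infinitely many components, no global log resolution) are confined to the relatively compact Stein sets of that theorem. We may take $V$ to be a \emph{minimal} non-gklt center of $(X,B+D+M+P)$ with the stated properties, and, after the reductions used in the proof of Theorem~\ref{thm: glct(I,J) acc}, we may assume $X$ (hence $V$) projective; as $\dim V\le\dim X-2$, every non-gklt place with center $V$ is exceptional over $X$. \textbf{First}, apply (the global form of) Theorem~\ref{thm: dlt-blowup for g-pairs} to the generalized pair with boundary $B+D$ and nef part $M+P$, obtaining a $\Q$-factorial projective bi-meromorphic $g\colon Y\to X$ with $(Y,B_Y^{\le1}+M_Y)$ gdlt and $K_Y+B_Y+M_Y=g^*(K_X+B+D+M+P)$; passing to a common higher model we may write $M_Y=\sum\mu_jM_{Y,j}+\sum\nu_kP_{Y,k}$ with $M_{Y,j},P_{Y,k}$ relatively nef Cartier and $\mu_j,\nu_k\in I$, while $B_Y$ has coefficients in $I$. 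Since $V$ is a non-gklt center, some component of $\lfloor B_Y^{\le1}\rfloor$ dominates a non-gklt center contained in $V$; after a further blow-up and an MMP with scaling (tie-breaking against a small general ample divisor, and if necessary a small multiple of $D$, as in \cite[\S4]{HMX14}) we may assume there is a prime divisor $E\subseteq\lfloor B_Y^{\le1}\rfloor$ with $g(E)=V$ which is the \emph{unique} non-gklt place over the generic point of $V$, with $Y$ still $\Q$-factorial near $V$. Then $\dim E=\dim Y-1=n$ and the fibres of $g|_E\colon E\to V$ have dimension $n-\dim V\ge1$.

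\textbf{Next}, generalized adjunction (\cite{BZ16}, in the analytic form of \cite{Fuj22a}) yields a glc generalized pair $(E,B_E+M_E)$ with
\[
K_E+B_E+M_E=(K_Y+B_Y+M_Y)|_E ,
\]
where $M_E$ is the restriction of $M_Y$, again a combination of relatively nef Cartier divisors with coefficients in $I$, and where Koll\'ar's formula for the different shows each coefficient of $B_E$ has the form $\tfrac{m-1+\alpha}{m}$ with $m\in\N^+$ and $\alpha\in I_+$, so the coefficients of $B_E$ lie in $D(I)$. Let $S$ be a general fibre of $E\to V$ (replacing $V$ by a resolution and $E$ by the main component of the base change, so that $S$ is a smooth projective variety with $1\le\dim S\le n$); restricting once more, $(S,\Delta+N)$ with $\Delta=B_E|_S$, $N=M_E|_S$ is glc, $N'$ is a combination of relatively nef Cartier divisors with coefficients in $I$ on a high model $S'\to S$, and the coefficients of $\Delta$ lie in $D(I)$. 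Finally, $K_Y+B_Y+M_Y=g^*(K_X+B+D+M+P)$ is numerically trivial over $X$, hence its restriction to $E$ is numerically trivial over $V$, hence its restriction to the general fibre $S$ is numerically trivial: $K_S+\Delta+N\equiv0$. This gives (1)--(4).

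\textbf{The dichotomy.} The hypothesis that $V$ is a non-gklt center of $(X,B+D+M+P)$ but not of $(X,B+M)$ forces $a(E;X,B+M)>0$ and $a(E;X,B+D+M+P)=0$, so the positive quantity $a(E;X,B+M)$ is accounted for by $D$ and by the nef part $P'$: either $\mult_E$ of the pullback of $D$ is positive, or it is $0$ and the whole of $a(E;X,B+M)$ is caused by $P'$. In the first case, following the birational transform of $D$ through the blow-up, MMP, adjunction and restriction --- and using that $E$ is the unique non-gklt place over the generic point of $V$, so the relevant component of the different is not contracted and does meet $S$ --- one finds a component of $\Delta$ whose coefficient is produced by the different in the form $\tfrac{m-1+\alpha+c}{m}$, with $c$ the coefficient of the corresponding component of $D$ (or of $P'$, if it is $P'$ rather than $D$ whose transform meets $E$), which is (i). In the second case it is the nef part that makes $E$ a non-gklt place; tracking $P'$ through the two restrictions shows the corresponding relatively nef Cartier summand $N'_i$ of $N'$ is not numerically trivial on $S$ and has coefficient $g+\nu_k$ with $g\in I$ and $\nu_k>0$ a coefficient of $P'$ --- and $g+\nu_k\in I$ since it is a coefficient of $M'+P'$ --- which is (ii). The main difficulty is precisely this step: the ``extra'' coefficient from $D$ or $P'$ must not be lost --- it must not be absorbed into a coefficient-$1$ component of $\lfloor B_Y\rfloor$, must not lie over a proper subvariety of $V$ missed by the general fibre, and the nef summand carrying it must not become numerically trivial on $S$ --- which is exactly what dictates the careful choices above (minimality of $V$, tie-breaking, and the MMP placing $K_Y+B_Y+M_Y$ as in Theorem~\ref{thm: dlt-blowup for g-pairs}), following \cite[\S4--5]{HMX14} and \cite{Liu18}; all of this takes place over relatively compact Stein subsets, where MMP and adjunction are available by \cite{Fuj22a}, so the argument carries over analytically.
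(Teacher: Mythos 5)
Your construction up to (1)--(4) follows the paper's route (dlt blow-up as in Theorem~\ref{thm: dlt-blowup for g-pairs}, generalized adjunction to a non-gklt place over $V$, restriction to a general fibre of the map to $V$, with coefficients in $D(I)$ and numerical triviality), modulo one inaccuracy: there is no reduction to \emph{projective} $X$ available in the analytic setting -- the reductions in Theorem \ref{thm: acc for glct} only give relatively compact $X$, and the paper instead works over a Stein compact $W$ and gets projectivity of $S$ because it is a fibre of the projective morphism $Y\to X$ over a point of $V$; this part of your argument is repairable.

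The genuine gap is in the dichotomy, which you yourself flag as ``the main difficulty'' but then only assert. Knowing that $\mult_E$ of $g^*D$ (or of $g^*P$) is positive does not imply that the strict transform of $D$, or a nef summand coming from $P'$, is actually \emph{visible on the general fibre} $S$: the strict transform $\hat D_Y$ may fail to meet the chosen place over the generic point of $V$, and $P'|_{S'}$ may well be numerically trivial even when the log discrepancy drop is ``caused by $P'$'' -- so your claimed implication in the second case is false as stated, and in the first case no component of $\Delta$ with the required coefficient $\tfrac{m-1+\alpha+c}{m}$ need appear. Uniqueness of the non-gklt place via tie-breaking does not cure this. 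The paper's proof supplies exactly the missing mechanism: it considers the effective exceptional divisor $E=\pi^*(D+P)-\hat D_Y-P_Y$, which is nonzero because $V$ is not a non-gklt center of $(X,B+M)$, and applies \cite[Lemma 3.6.2]{BCHM10} to find a component $F$ (mapping onto $V$) carrying a covering family of curves contracted over $X$ with $E\cdot C<0$, hence $(\hat D_Y+P_Y)\cdot C>0$; this forces $(\hat D_Y+P_Y)|_S\not\equiv 0$ on the general fibre. Only then does the case split work: if $P'|_{S'}\not\equiv 0$ one gets (ii), and otherwise one writes $g^*P_Y=P'+G$ and concludes $G_S+\hat D_S\neq 0$, whose contribution through the adjunction computation ($\hat D_S\le (B_Y+D_Y-F)|_S$, $G_S\le R_S$) produces the coefficient of shape $\tfrac{m-1+\alpha+c}{m}$ in case (i). Without this curve-theoretic nontriviality statement and the bookkeeping of the discrepancy part $G$ of the nef divisor, your proof of the dichotomy does not go through.
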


\begin{proof}
We can replace $V$ with a maximal (with respect to inclusion) glc center of $(X,B+D+M+P)$ satisfying $\dim V\le\dim X-2$ and $V$ is not a glc center of $(X,B+M)$. Let $Q$ be an analytically sufficiently general point of $V$. Consider an
open neighborhood $U$ of $Q$ and a Stein compact subset $W$ of $X$ such that $U\subset W$
and that $\Gamma(W,\OO_X)$ is noetherian. By Theorem \ref{thm: dlt-blowup for g-pairs}, after shrinking $X$ around $W$ suitably, we can construct a projective bimeromorphic morphism $\pi: Y\to X$ with $K_Y+B_Y+D_Y+M_Y+P_Y=\pi^*(K_X+B+D+M+P)$ such that
\begin{enumerate}
    \item $Y$ is $\Q$-factorial over $W$, and $B_Y,D_Y,M_Y,P_Y$ are pushforwards of $B',D',M'$, $P'$ (after possibly replacing $X'$ by a higher model),
    \item $(Y,B_Y+D_Y+M_Y+P_Y)$ is gdlt, where $B_Y+D_Y$ is the boundary part and $M_Y+P_Y$ is the nef part,
    \item $a(E,X,B+D+M+P)=0$ holds for every $\pi$-exceptional divisor $E$, and
    \item there exists a $\pi$-exceptional divisor $F$ such that $\pi(F)=V$.
\end{enumerate}

Let $\hat{D}$ be the birational transform of $D$ on $X'$ and $\hat{D}_Y$ be the pushforward of $\hat{D}$ on $Y$. We first claim that we can choose $F$ in (4) such that $(\hat{D}_Y+P_Y)|_{F_v}$ is not numerically trivial, where $v\in V\cap U$ is an analytically sufficiently general point. 
Let $E=\pi^*(D+P)-\hat{D}_Y-P_Y$, then we can see $E\ge0$ since $(X,B+M)$ is glc and every $\pi$-exceptional divisor $E_i$ has log discrepancy $a(E_i;B+D+M+P)=0$. 
Moreover, since $V$ is not a log canonical center of $(X,B+M)$, $E$ is non-trivial. Therefore by \cite[Lemma 3.6.2]{BCHM10} (see also \cite[Section 11]{Fuj22a}) there is a component $F$ of $E$ with a covering family of curves $C$ (contracted over $X$) such that $E\cdot C<0$. So $(\hat{D}_Y+P_Y)\cdot C>0$ for such curves and hence $(\hat{D}_Y+P_Y)|_F$ is not numerically trivial over sufficiently general points of $V$.

After replacing $X'$ by a higher model, we may assume that $g:X'\to Y$ is a projective morphism. Let $K_{X'}+\Delta'+M'+P'=f^*(K_X+B+D+M+P)$ and $F'$ be the birational transform of $F$ on $X'$. Let $\Delta_{F'}$ be the $\R$-divisor defined by the adjunction $K_{F'}+\Delta_{F'}=(K_{X'}+\Delta')|_{F'}$ and $\Delta_F,M_F,P_F$ be the pushforwards of $\Delta_{F'},M'|_{F'},P'|_{F'}$, then these data define a generalized pair $(F,\Delta_F+M_F+P_F)$ with nef part $M_F+P_F$ as in Definition \ref{def: g-pairs} and we have
$$
(K_Y+B_Y+D_Y+M_Y+P_Y)|_F\sim_{\R}K_F+\Delta_F+M_F+P_F.
$$
Following \cite[Definition 4.7 and Remark 4.8]{BZ16}, $(F,\Delta_F+M_F+P_F)$ is generalized log canonical. 

Next we calculate the coefficients of $\Delta_F$, cutting by hyperplanes in $Y$ we can assume that $\dim Y=2$ (note that we are working over a Stein set $W$ and $Y$ is projective over $W$). Let $p\in F$ be a point and $l_p$ be the Cartier index at $p\in Y$. In this case we may assume that $F'$ is a normal curve isomorphic to $F$ (since $F$ is already normal), so we may also regard $p$ as a point on $F'$. Then by classification of klt surface singularities we obtain that 
\begin{align*}
    \mult_p\Delta_F&=\frac{l_p-1}{l_p}+\mult_p(B_Y+D_Y-F)|_F+\mult_p((g^*(M_Y+P_Y)-M'-P')|_{F'})\\
    &=\frac{l_p-1+\beta+\gamma}{l_p}\in D(I)
\end{align*}
where $\mult_p(B_Y+D_Y-F)|_F=\frac{\beta}{l_p}$, $\mult_p((g^*(M_Y+P_Y)-M'-P')|_{F'})=\frac{\gamma}{l_p}$ and $\beta,\gamma,\beta+\gamma\in I_+$ by the assumptions on the coefficients of $B+D$ and $M'+P'$. 

Let $S$ (resp. $S'$) be the general fiber of the Stein factorization of $F\to V$ (resp. $F'\to V$), $\Delta=\Delta_F|_S$, $N'=(M'+P')|_{S'}$, $N=(M_F+P_F)|_S$. Then, these data define a generalized pair $(S,\Delta+N)$ with nef part $N$ as in Definition \ref{def: g-pairs} and we have
\begin{enumerate}
    \item $(S,\Delta+N)$ is glc,
    \item $K_S+\Delta+N\sim_{\R}0$,
    \item the coefficients of $N'$ belong to $I$,
    \item $(\hat{D}_Y+P_Y)|_S$ is not numerically trivial.
\end{enumerate}
If $P'|_{S'}$ is not numerically trivial, then (ii) in the statement is satisfied and we are done. So we can assume that $P'|_{S'}\equiv0$, hence $P_F|_S\equiv0$. If we write $g^*(P_Y)=P'+G$ and let $G_S$ be the pushforward of $G|_{S'}$ on $S$, then we have 
$$
(\hat{D}_Y+P_Y)|_S=P_F|_S+G_S+\hat{D}_S\equiv G_S+\hat{D}_S\neq0,
$$
where $\hat{D}_S:=\hat{D}_Y|_S$. Let $R_F$ be the pushforward of $(g^*(M_Y+P_Y)-M'-P')|_{F'}$ 
on $F$ and $R_S:=R_F|_S$, then $\mult_p(R_F)=\frac{\gamma}{l_p}$ in the previous computation. Notice that $\hat{D}_S\le (B_Y+D_Y-F)|_S$ and $G_S\le R_S$, therefore $G_S+\hat{D}_S\neq0$ implies that (i) holds.



\end{proof}

\begin{thm}\label{thm: t_i is acc}
Let $\Lambda$ be a DCC set of non-negative real numbers and $d$ a positive integer. Assume $X_i,B_i,M_i,M'_i,D_i,P'_i,P'_i$ are as in Definition \ref{def: glct} such that for any $i\ge1$, 
\begin{enumerate}
    \item  $(X_i, B_i+M_i)$ are glc pairs of dimension $d$,
    \item $M_i' = \sum \mu_{i,j} M'_{i,j}$ where $M'_{i,j}$ are relatively nef Cartier divisors and  $\mu_{i,j}\in \Lambda$,
    \item $P_i' = \sum  \nu_{i,k}P'_{i,k}$ where $P'_{i,k}$ are relatively nef Cartier divisors and $\nu_{i,k}\in \Lambda$,
    \item the coefficients of $B_i$ and $D_i$ belong to $\Lambda$,
    \item $(X_i,B_i+M_i+t_iD_i+t_iP_i)$ is glc and has a glc center $V_i$ which is not a glc center of $(X_i,B_i+M_i)$ for some $t_i>0$.
\end{enumerate}
Then $T=\{t_i\}_{i\ge1}$ is an ACC set.
\end{thm}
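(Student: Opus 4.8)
The plan is to argue by contradiction and by induction on the dimension $d$, following the strategy of \cite{HMX14} and \cite{Liu18}. Suppose $T$ is not ACC; then after passing to a subsequence we may assume $t_1 < t_2 < \cdots$ is strictly increasing with limit $t_\infty := \lim t_i$. We want to derive a contradiction from this, using Lemma \ref{lem: glct make num trivial g-pair} to pass to a lower-dimensional generalized pair with boundary coefficients in $D(\Lambda)$ (which is still DCC since $\Lambda$ is) on which $K_S + \Delta + N$ is numerically trivial, and then either invoke the ACC for coefficients of numerically trivial generalized log canonical pairs (global ACC, as in \cite{BZ16}, \cite{Liu18}, or the analytic analogue via compactness of $S$) or invoke the inductive hypothesis.

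The first step is a local reduction. For each $i$, since $(X_i, B_i + M_i + t_i D_i + t_i P_i)$ is glc and has a glc center $V_i$ which is not a glc center of $(X_i, B_i + M_i)$, I would first dispose of the case $\dim V_i = d - 1$: in that case $V_i$ is a divisor, and the coefficient of $V_i$ (if it appears in $B_i + D_i$, or is created by the nef parts) jumps in a way controlled by $\Phi$-type functions, and a direct argument (restriction to $V_i$ and adjunction, as in \cite[Theorem 1.5]{HMX14}) handles it; this is essentially the divisorial ACC and reduces to dimension $d-1$. The substantive case is $\dim V_i \le d - 2$, where Lemma \ref{lem: glct make num trivial g-pair} applies: taking $n = d - 1$ and $I = \Lambda \cup \{1\}$ (note $1 \in I$ is required), with $D = t_i D_i$ and $P' = t_i P'_i$ replacing $D, P'$, we obtain generalized log canonical pairs $(S_i, \Delta_i + N_i)$ of dimension $\le d - 1$ with $K_{S_i} + \Delta_i + N_i \equiv 0$, with $\operatorname{coeff}(\Delta_i) \in D(\Lambda \cup \{1\})$, with nef-part coefficients $a_{i,\ell} \in \Lambda \cup \{1\}$, and such that one of the two alternatives (i), (ii) holds.

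The heart of the matter is then to extract a contradiction from the strict monotonicity of $t_i$. In alternative (i), some component of $\Delta_i$ has coefficient $\frac{m_i - 1 + \alpha_i + c_i}{m_i}$ with $c_i \in \Lambda$ the coefficient of a component of $D_i$ (or $P'_i$); because we have rescaled $D_i$ by $t_i$, this coefficient actually has the shape $\frac{m_i - 1 + \alpha_i + t_i c_i'}{m_i}$ (or the nef-part analogue), i.e. it depends on $t_i$ in a controlled affine way, while the \emph{other} coefficients of $\Delta_i$ and $N_i$ lie in a fixed DCC set independent of $i$. Applying the global ACC for numerically trivial generalized log canonical pairs of bounded dimension (whose coefficient set must satisfy ACC) forces these $t_i$-dependent coefficients to eventually stabilize, contradicting $t_i \nearrow t_\infty$ strictly — here the key input is that the global ACC statement in dimension $\le d-1$ is either already known (\cite{BZ16}, \cite{Liu18}) or is part of the same induction, via passing from $S_i$ to a projective compactification and noting $K_{S_i} + \Delta_i + N_i \equiv 0$ already gives a global numerically trivial structure. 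Alternative (ii) is handled symmetrically using the ACC for the nef-part coefficients. The main obstacle I anticipate is precisely in making the global ACC / inductive step rigorous in the analytic category: one must ensure that $S_i$ can be taken projective (which Lemma \ref{lem: glct make num trivial g-pair} does provide, since $S_i$ is a general fiber of a Stein factorization of a morphism from a projective-over-Stein variety, hence genuinely projective), that the generalized ACC machinery of \cite{BZ16}/\cite{Liu18} applies verbatim to such projective $(S_i, \Delta_i + N_i)$, and that the bookkeeping linking the $t_i$ to a single moving coefficient (rather than a product of several, which could conspire) is tight enough to force the contradiction. A secondary technical point is confirming that $D(\Lambda \cup \{1\})$ and the relevant $I_+$ sets remain DCC, which follows from standard facts about DCC sets once $1$ is the only possible accumulation point is \emph{not} assumed here — but for the bare ACC conclusion of this theorem, only the DCC-ness of $\Lambda$ (hence of $D(\Lambda \cup\{1\})$) is needed, and that is automatic.
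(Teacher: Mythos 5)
Your overall strategy is the same as the paper's (split off the case $\dim V_i=d-1$, and for $\dim V_i\le d-2$ feed Lemma \ref{lem: glct make num trivial g-pair} into the global ACC for numerically trivial projective generalized pairs of \cite{BZ16}, \cite{Liu18} to force the $t_i$-dependent coefficient to stabilize), but there is a concrete gap in the bookkeeping that you yourself half-flagged. You invoke Lemma \ref{lem: glct make num trivial g-pair} with $I=\Lambda\cup\{1\}$ while replacing $D,P'$ by $t_iD_i,t_iP'_i$; then hypothesis (4) of the lemma fails, since the coefficients of $t_iD_i$, $t_iP'_i$ and of $M'_i+t_iP'_i$ are of the form $t_ic$ or $\mu+t_i\nu$ with $c,\mu,\nu\in\Lambda$, which are not in $\Lambda\cup\{1\}$. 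Relatedly, your claim that ``the other coefficients of $\Delta_i$ and $N_i$ lie in a fixed DCC set independent of $i$'' is unjustified: in the adjunction formula many coefficients of $\Delta_i$ and $N_i$ involve $t_i$, not just the one singled out in alternatives (i)/(ii). Without a fixed DCC set containing \emph{all} these coefficients you cannot apply \cite[Theorem 1.6]{BZ16} at all. The paper's fix is the missing idea: after passing to a strictly increasing subsequence, the set $T=\{t_i\}$ is itself DCC, hence $I:=\Lambda\cup(T\cdot\Lambda)\cup(\Lambda+T\cdot\Lambda)$ is DCC; with this $I$ the lemma applies, the global ACC puts the relevant coefficients in a finite set $\Lambda^0$, and since in (i)/(ii) that coefficient has the form $\frac{m_i-1+\alpha_i+t_ic_i}{m_i}$ or $g_i+t_i\nu_{i,k}$ with $c_i,\nu_{i,k}>0$ in a DCC set, the $t_i$ land in a finite set, contradicting strict monotonicity. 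This is exactly the ``conspiring coefficients'' worry you raised, and it needs this argument, not an assertion.

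Secondly, your treatment of the divisorial case $\dim V_i=d-1$ is misdirected: restriction/adjunction to $V_i$ in the style of \cite[Theorem 1.5]{HMX14} would require $V_i$ to be compact or projective to run any global ACC on it, which is not available here since the $X_i$ in this theorem are arbitrary analytic varieties (the reduction to relatively compact sets happens only later, in the deduction of the ACC for thresholds). No induction or adjunction is needed: if $V_i$ is a divisorial glc center of $(X_i,B_i+M_i+t_i(D_i+P_i))$ but not of $(X_i,B_i+M_i)$, then its coefficient satisfies $\lambda_i'+t_i\lambda_i=1$ with $\lambda_i'\in\Lambda\cap[0,1]$ and $0<\lambda_i\in\Lambda$, so $t_i=(1-\lambda_i')\cdot\frac{1}{\lambda_i}$ lies in the product of two ACC sets, which is ACC; this already contradicts strict monotonicity. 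With these two repairs your proposal coincides with the paper's proof.
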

\begin{proof}
Assume that the sequence $\{t_i\}_{i\ge1}$ is strictly increasing, if $\dim V_i=d-1$, then we have $1-t_i\lambda_i=\lambda_i'\in\Lambda\cap[0,1]$ for some $0<\lambda_i\in\Lambda$. Let 
$$
\Gamma_1:=\{\frac{1}{\lambda}~|~0<\lambda\in\Lambda\},~\Gamma_2:=\{1-\lambda'~|~\lambda'\in\Lambda\cap[0,1]\},
$$ 
then $\Gamma_1$, $\Gamma_2$, and $\Gamma_1\cdot \Gamma_2$ are ACC sets and $\{t_i\}_{i\ge1}\subset\Gamma_1\cdot\Gamma_2$. This contradicts the assumption that $\{t_i\}_{i\ge1}$ is strictly increasing, therefore we can assume that $\dim V_i\le d-2$.

Let $I:=\Lambda\cup(T\cdot\Lambda)\cup (\Lambda+T\cdot\Lambda)$, then $I$ is also a DCC set. 
Possibly replacing $\{t_i\}_{i\ge1}$ by a subsequence, by Lemma \ref{lem: glct make num trivial g-pair} and \cite[Theorem 1.6]{BZ16}, one of the following happens:
\begin{enumerate}
    \item[(i)] $\frac{m_i-1+\alpha_i+t_ic_i}{m_i}$ belongs to a finite set $\Lambda^0$ for every $i\ge 1$, where $m_i\in\mathbb{N}^*$, $\alpha_i\in I_{+}$ and $0<c_i\in I$.
    \item[(ii)] $g_i+t_iv_{i,k}$ belongs to a finite set $\Lambda^{0}$ for every $i\ge 1$, where $g_i\in I$ and $v_{i,k}>0$.
\end{enumerate}
In either case we can conclude that $t_i$ must belong to a finite set $\Lambda^{1}$, which is a contradiction and we are done.
\end{proof}

It is easy to see that Theorem \ref{thm: glct(I,J) acc} is equivalent to the following theorem by taking $\Lambda=I\cup J$, whose algebraic case is exactly \cite[Theorem 1.5]{BZ16}.

\begin{thm}\label{thm: acc for glct}(ACC for generalized lc thresholds.)
Fix $\Lambda$ be a DCC set of nonnegative real numbers and $d$ a natural
number. Then there is an ACC set $\Theta$ depending only on $\Lambda$ and $d$ such that if
$(X, B + M)$, $M'$, $P'$, $P$, $D$ are as in Definition \ref{def: glct}, and
\begin{enumerate}
\item  $(X, B + M)$ is glc of dimension $d$,
\item $M' = \sum \mu_j M'_j$ where $M'_j$ are relatively nef Cartier divisors and  $\mu _j\in \Lambda$,
\item $P' = \sum  \nu_kP'_k$ where $P'_k$ are relatively nef Cartier divisors and $\nu_k\in \Lambda$, and
\item the coefficients of $B$ and $D$ belong to $\Lambda$,
\end{enumerate}
then the generalized lc threshold ${\rm glct}(X,B + M;D + P)$ of $D + P$ with respect to $(X, B + M)$ belongs
to $\Theta$.

\end{thm}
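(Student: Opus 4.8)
The plan is to reduce Theorem~\ref{thm: acc for glct} to Theorem~\ref{thm: t_i is acc} by a standard argument: suppose for contradiction that the conclusion fails. Then there exists a DCC set $\Lambda$, a dimension $d$, and an infinite strictly increasing sequence of generalized log canonical thresholds $t_i = {\rm glct}(X_i, B_i+M_i; D_i+P_i)$ arising from data $(X_i, B_i+M_i)$, $M'_i$, $P'_i$, $D_i$ satisfying (1)--(4). By the Lemma immediately following Definition~\ref{def: glct}, each $(X_i, B_i+M_i+t_i(D_i+P_i))$ is itself glc. We may also assume $t_i > 0$ for all $i$, since if infinitely many $t_i = 0$ the sequence is not strictly increasing, and we may discard the finitely many initial terms.

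The key point is to produce, for each $i$, a glc center $V_i$ of $(X_i, B_i+M_i+t_i(D_i+P_i))$ that is not a glc center of $(X_i, B_i+M_i)$; once this is in place, hypothesis (5) of Theorem~\ref{thm: t_i is acc} holds with $\Lambda$ as given and $d$ as given, and that theorem tells us $\{t_i\}$ is an ACC set --- contradicting strict monotonicity. The subtlety, flagged in Remark~\ref{rem: lct in relative compact set create lc centers}, is that $X$ need not be compact, so the threshold need not be realized by a non-gklt center globally. First I would handle this by working locally: by definition of $t_i$ as a supremum, for every $\epsilon > 0$ the pair $(X_i, B_i+M_i+(t_i+\epsilon)(D_i+P_i))$ is \emph{not} glc, so there is a point $x_i \in X_i$ near which it fails to be glc. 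Shrinking $X_i$ to a relatively compact Stein neighborhood $U_i \ni x_i$ --- which does not disturb any of the hypotheses (1)--(4), as these are local or are preserved by restriction of the nef data --- we are in the (relatively) compact situation where, by Remark~\ref{rem: lct in relative compact set create lc centers}, the two notions of threshold agree and $(X_i, B_i+M_i+t_i(D_i+P_i))$ genuinely has a non-gklt (hence glc) center $V_i$. Because $(X_i, B_i+M_i)$ is glc and $D_i+P_i$ is effective and nonzero along $V_i$ (otherwise $V_i$ would already be a glc center coming from $B_i+M_i$ alone, or the threshold would be $+\infty$), and because $t_i$ is exactly the threshold rather than a smaller number, this center $V_i$ cannot be a glc center of $(X_i, B_i+M_i)$: if it were, log discrepancies along it would be unchanged only if $(D_i+P_i)$ contributed nothing there, forcing $t_i$ to not be the threshold locally.

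I would then invoke Theorem~\ref{thm: t_i is acc} directly with this data: conditions (1)--(4) there are our (1)--(4), and (5) is the existence of $V_i$ just constructed, with our $t_i > 0$. The conclusion is that $T = \{t_i\}$ is ACC, contradicting the assumption that the sequence is strictly increasing. Hence no such sequence exists, meaning the set of all possible ${\rm glct}(X,B+M;D+P)$ --- call it $\Theta$ --- satisfies the ACC; and it depends only on $\Lambda$ and $d$ by construction. This gives Theorem~\ref{thm: acc for glct}, and the remark in the text that it is equivalent to Theorem~\ref{thm: glct(I,J) acc} via $\Lambda = I \cup J$ finishes that theorem too (one checks that coefficients of $B$ in $I$, coefficients of $D$ in $J$, $\mu_j \in I$, $\nu_k \in J$ all lie in $\Lambda = I\cup J$, and conversely any threshold from $\Lambda$-data can be split).

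The main obstacle I expect is the careful justification of the local reduction in the second paragraph: one must check that shrinking $X_i$ around a point of non-glc-ness genuinely converts the ``supremum" definition of Definition~\ref{def: glct} into the ``computed by an lc place" situation of \cite{Fuj22b}, and that the resulting center $V_i$ is provably \emph{new} (not already a glc center of $(X_i,B_i+M_i)$). This requires comparing log discrepancies $a(E; X_i, B_i+M_i)$ and $a(E; X_i, B_i+M_i+t_i(D_i+P_i))$ over the center and using that $D_i+P_i$ has strictly positive ``multiplicity'' there in the generalized sense; the potential subtlety that $D_i$ may have infinitely many components is precisely why one shrinks to a relatively compact Stein open first, after which log resolutions exist and the bookkeeping is finite. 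Everything downstream is then a black-box application of Theorem~\ref{thm: t_i is acc}.
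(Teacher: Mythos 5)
There is a genuine gap, and it sits exactly at the point you yourself flagged as the main obstacle. After you fix $\epsilon>0$, pick a point $x_i$ where $(X_i,B_i+M_i+(t_i+\epsilon)(D_i+P_i))$ fails to be glc, and shrink to a relatively compact Stein neighborhood $U_i\ni x_i$, the generalized lc threshold \emph{of the restricted data on $U_i$} lies somewhere in $[t_i,t_i+\epsilon]$ and in general is strictly larger than $t_i$: the points of non-glc-ness for coefficient $t_i+\epsilon$ move as $\epsilon\to 0$ and may escape every fixed relatively compact set. Consequently $(U_i,B_i+M_i+t_i(D_i+P_i))$ can be gklt on $U_i$ with no non-gklt center at all --- this is precisely the phenomenon of \cite[Example 1.3]{Fuj22b} recalled in Remark \ref{rem: lct in relative compact set create lc centers}; that remark guarantees a new glc center only for the threshold of the relatively compact space itself, not for the restricted value of the global threshold. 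So hypothesis (5) of Theorem \ref{thm: t_i is acc} is not verified for your $t_i$, and the contradiction you draw is not justified as written. (Your parenthetical argument that the center is ``new'' also leans on the threshold being exactly attained at $t_i$ on $U_i$, which is the same unproved point.)

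The paper avoids this by never asking the global threshold to be computed by a center: it writes $c={\rm glct}(X,B+M;D+P)$ as the limit of a non-increasing sequence $c_i={\rm glct}(U_i,B_i+M_i;D_i+P_i)$ of thresholds of relatively compact opens, each of which \emph{is} computed by a glc center that is not a center of $(U_i,B_i+M_i)$ (Remark \ref{rem: lct in relative compact set create lc centers}), applies Theorem \ref{thm: t_i is acc} to those local thresholds, and then passes to limits of non-increasing sequences, using that the relevant closure operation preserves the chain condition. Your argument can be repaired in the same spirit: for each $i$ replace $t_i$ by the local threshold $c_i\in[t_i,t_i+\epsilon_i]$ on $U_i$, choosing $\epsilon_i$ small enough (e.g. $\epsilon_i<t_{i+1}-t_i$) that the sequence $\{c_i\}$ is still strictly increasing; these $c_i$ do satisfy hypothesis (5) and Theorem \ref{thm: t_i is acc} then gives the contradiction. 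But that extra step is essential, not a bookkeeping detail, so the proposal as it stands does not prove the theorem.
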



\begin{proof}
Suppose that $c={\rm glct}(X,B + M;D + P)>0$, then there exists a non-increasing sequence $c_i\geq c_{i+1}\geq\cdots $ with $\lim_{i\to\infty}c_i=c$ and relatively compact open subsets
$U_i\subset X$ such that $c_i={\rm glct}(U_i,B_i + M_i;D_i + P_i)$ where $B_i + M_i+D_i + P_i=(B + M+D + P)|_{U_i}$. By Remark \ref{rem: lct in relative compact set create lc centers} we know that $(U_i,B_i+M_i+c_iD_i+c_iP_i)$ has a glc center which is not a glc center of $(U_i,B_i+M_i)$. Since the closure of a DCC set is also a DCC set, it suffices to consider relatively compact varieties $X$, and then the statement follows from Theorem \ref{thm: t_i is acc}.
\end{proof}

\begin{proof}[Proof of Theorem \ref{thm: accumulation pts come from lower dim}]
Suppose $c$ is an accumulation point of $\mathrm{GLCT}_n(I)$, then again there exists a non-increasing sequence $c_i\geq c_{i+1}\geq\cdots $ with $\lim_{i\to\infty}c_i=c$ and relatively compact open subsets
$U_i\subset X$ such that $c_i={\rm glct}(U_i,B_i + M_i;D_i + P_i)$ where $B_i + M_i+D_i + P_i=(B + M+D + P)|_{U_i}$. Therefore by Lemma \ref{lem: glct make num trivial g-pair} we know that $c_i\in\mathcal{N}_d(I,\mathbb{N},\mathbb{N})$, which is defined in \cite[Definition 2.18]{Liu18}. Notice that since the generalized pair $(S,\Delta+N)$ constructed in Lemma \ref{lem: glct make num trivial g-pair} is projective,  we are in  the algebraic setting  and so the proof follows from \cite{Liu18}.
\end{proof}


\begin{thebibliography}{asdf}
\bibitem[Birkar07]{Birkar07} Birkar, Caucher, {\it Ascending chain condition for log canonical thresholds and termination of log flips.} Duke Math. J. 136 (2007), no. 1, 173--180.
 \bibitem[Birkar21]{Birkar21} Birkar, Caucher, {\it  Generalised pairs in birational geometry.} EMS Surv. Math. Sci. 8 (2021), no. 1--2, 5--24.
\bibitem[BCHM10]{BCHM10}
C. Birkar, P. Cascini, C. D. Hacon and J. M\textsuperscript{c}Kernan, \textit{Existence of minimal models for varieties of log general type}, J. Amer. Math. Soc. \textbf{23} (2010), no. 2, 405--468.
\bibitem[BZ16]{BZ16}Caucher Birkar, and De-Qi Zhang , {\it Effectivity of Iitaka fibrations and pluricanonical systems of polarized pairs},
Publications math\'ematiques de l'IHES, volume 123, pages 283--331 (2016)
\bibitem[Chen22]{Chen22} Yen-An Chen , {\it ACC for foliated log canonical thresholds} arXiv:2202.11346 
\bibitem[DHP22]{DHP22} O. Das, C. Hacon and M. Paun, {\it On the 4-dimensional minimal model program
for K \"ahler varieties}, arXiv e-prints , arXiv:2205.12205 (May 2022),
\bibitem[DHP10]{DHP10} J.-P. Demailly, C. Hacon, M. Paun, {\it Extension theorems, Non-vanishing and the existence of good minimal models}, 2010, arXiv:1012.0493v2

\bibitem[Fuj22a]{Fuj22a}  O. Fujino, {\it Minimal model program for projective morphisms between complex
analytic spaces}, (2022)
\bibitem[Fuj22b]{Fuj22b}  O. Fujino, {\it ACC for log canonical thresholds for complex analytic spaces}, https://arxiv.org/abs/2208.11872
\bibitem[Liu18]{Liu18} J. Liu, {\it Accumulation point theorem for generalized log canonical thresholds},  arXiv:1810.12381v1.
 \bibitem[HMX14]{HMX14} C. D. Hacon, J. McKernan, C. Xu, {\it ACC for log canonical thresholds}, Ann. of Math. (2) 180
(2014), no. 2, 523–571.
 \bibitem[HMX18]{HMX18} C. D. Hacon, J. McKernan, C. Xu, {\it Boundedness of moduli of varieties of general type.} J. Eur. Math. Soc. (JEMS) 20 (2018), no. 4, 865--901.
 \bibitem[Sato21]{Sato21}  Sato, Kenta {\it Ascending chain condition for F-pure thresholds with fixed embedding dimension.} Int. Math. Res. Not. IMRN 2021, no. 10, 7205--7223.
\bibitem[XZ21]{XZ21}  Xu, Chenyang; Zhuang, Ziquan, {\it Uniqueness of the minimizer of the normalized volume function.} Camb. J. Math. 9 (2021), no. 1, 149--176.
\end{thebibliography}
\end{document}